\title{Bounded orbits and global fixed points
for groups acting on the plane}
\author{}
\address{}
\email{mann@math.uchicago.edu}
\urladdr{}
\newtheorem{theorem}{Theorem}[section]    
\newtheorem{lemma}[theorem]{Lemma}          
\theoremstyle{definition}
\newtheorem{corollary}[theorem]{Corollary}
\newtheorem{example}[theorem]{Example}
\newtheorem{fact}[theorem]{Fact}
\newtheorem*{remark}{Remark}             
\newtheorem*{LOcor}{Corollary \ref{LO}}
\renewcommand{\R}{\mathbb{R}}
\renewcommand{\Z}{\mathbb{Z}}
\newcommand{\N}{\mathbb{N}}
\renewcommand{\Q}{\mathbb{Q}}
\newcommand{\del}{\partial}
\newcommand{\pend}{\mathcal{P}}
\newcommand*{\defeq}{\mathrel{\vcenter{\baselineskip0.5ex \lineskiplimit0pt
                     \hbox{\scriptsize.}\hbox{\scriptsize.}}}%
                     =}
\DeclareMathOperator{\SL}{SL}
\DeclareMathOperator{\Homeo}{Homeo}
\DeclareMathOperator{\GL}{GL}
\DeclareMathOperator{\interior}{int}
\DeclareMathOperator{\Aff}{Aff}
\DeclareMathOperator{\princ}{pr}
\begin{document}

\begin{abstract}    

Let $G$ be a group acting on $\R^2$ by orientation-preserving homeomorphisms.  We show that a tight bound on orbits implies a global fixed point.  Precisely, if for some $k>0$ there is a ball of radius $r > \frac{1}{\sqrt{3}}k$ such that each point $x$ in the ball satisfies $\| g(x) - h(x)\| \leq k$ for all $g, h \in G$, and the action of $G$ satisfies a nonwandering hypothesis, then the action has a global fixed point.   In particular any group of measure-preserving, orientation-preserving homeomorphisms of $\R^2$ with uniformly bounded orbits has a global fixed point.  The constant $\frac{1}{\sqrt{3}}k$ is sharp.

As an application, we also show that a group acting on $\R^2$ by diffeomorphisms with orbits bounded as above is left orderable. 

\end{abstract}

\maketitle


\setcounter{section}{0}

\section{Introduction} 

The Brouwer plane translation theorem states that any orientation-preserving homeomorphism of the plane with a bounded orbit must have a fixed point.  We would like to extend this kind of fixed-point theorem to groups other than $\Z$ acting on the plane, finding some minimal restrictions on a group of homeomorphisms of $\R^2$ which guarantee a \emph{global fixed point} -- a point $x \in \R^2$ such that $gx =x$ for all $g \in G$.  

Several papers address the question of global fixed points for abelian groups.  Of note is recent work of Franks, Handel, and Parwani \cite{FHP}. They prove, using very different methods from those we employ in this paper, that any finitely generated abelian group of orientation-preserving $C^1$ diffeomorphisms of $\R^2$ leaving invariant a compact set has a global fixed point.   

Franks and others have also studied the fixed point sets of $\Z$-actions by measure-preserving homeomorphisms.  Much of this work focuses on measure-preserving homeomorphisms of surfaces, but since the universal cover of any closed, oriented surface of genus $g \geq 1$ is the plane, it is intimately related with the study of fixed points of homeomorphisms of the plane.   Theorem \ref{thm} 
here could be viewed as a first step towards generalizing these results to groups other than $\Z$.  

Our main theorem concerns the existence of global fixed points of an arbitrary group $G$ acting on the plane by orientation-preserving homeomorphisms, without making assumptions on the algebraic structure of $G$.  Of course, some restriction on the group action is necessary to ensure a global fixed point, for we must exclude group elements acting (for example) by translations.  To rule out such examples, we require that the group have \emph{uniformly bounded orbits} on a bounded set of sufficient size.   We also require a nonwandering hypothesis to eliminate more pathological counterexamples.  What is surprising is that these two conditions suffice:  

\begin{theorem} \label{thm}
Let $G$ be any group acting on $\R^2$ by orientation-preserving homeomorphisms.  Suppose that there exists a constant $k$ and a ball $B \subset \R^2$ of radius $r > \frac{1}{\sqrt{3}}k$ such that for all $g, h \in G$ and $x \in B$ we have $\|gx -hx\| \leq k$.  Suppose additionally that no point in the wandering set for any $g \in G$ is contained in $B$ or any bounded component of $\R^2 \setminus GB$.
Then $G$ has a global fixed point.  
\end{theorem}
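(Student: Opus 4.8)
The plan is to extract from a bounded orbit a $G$--invariant \emph{full} continuum — a compact connected set whose complement in $\R^2$ is connected — to pass to a minimal one, and to show that minimality, together with the nonwandering hypothesis, forces it to be a single point, which is then a global fixed point.

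Taking $h = \mathrm{id}$ in the hypothesis gives $\|gx - x\| \le k$ for all $x \in B$ and $g \in G$, so every orbit $Gx$ with $x \in B$ has diameter at most $k$ and $GB$ is bounded. By Jung's theorem in the plane a set of diameter $\le k$ lies in a closed disk of radius $\le \tfrac{1}{\sqrt{3}}k$; since $\tfrac{1}{\sqrt{3}}k < r$, each orbit closure through a point of $B$ lies in a disk of radius \emph{strictly} less than $r$, and this strictness — the only use of the numerical constant — keeps such orbits well inside $B$, hence (after the constructions below) keeps the invariant continuum inside the region where the nonwandering hypothesis applies; this is precisely what fails when $r = \tfrac{1}{\sqrt{3}}k$. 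Set $\mathcal O := GB \cup \{\text{bounded components of } \R^2 \setminus GB\}$: this is open, bounded, $G$--invariant, with connected complement, and — combining the nonwandering hypothesis on $B$ and on the bounded components of $\R^2\setminus GB$ with conjugation (if $x \in B$ is nonwandering for $g^{-1}hg$, then $g(x)$ is nonwandering for $h$) — every point of $\mathcal O$ is a nonwandering point of every $g \in G$.

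Fixing $x_0 \in B$, the filled orbit closure $\overline{Gx_0} \cup \{\text{bounded components of } \R^2 \setminus \overline{Gx_0}\}$ is compact, $G$--invariant, with connected complement, and one checks (using the disk bound above) that it lies in the region of nonwandering points. Zorn's lemma applied to the family of its nonempty $G$--invariant compact subsets with connected complement — a nested intersection of such sets is again one — gives a minimal element $K_0$; a preliminary reduction, using that $G$ permutes the components of $K_0$ and applying Brouwer's plane translation theorem (a nonwandering orientation--preserving homeomorphism of $\R^2$ has a fixed point) to suitable group elements, should further let us assume $K_0$ connected. Thus $K_0$ is a $G$--invariant continuum, lying in a disk of radius $<r$, such that $G$ acts by homeomorphisms on a neighbourhood of $K_0$ all of whose points are nonwandering.

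If $K_0$ is a single point it is fixed by $G$ and we are done. Otherwise $\Omega := S^2 \setminus K_0$ is an open topological disk on which $G$ acts by orientation--preserving homeomorphisms fixing $\infty$, and the theory of prime ends (Carath\'eodory) extends the action continuously to the circle of prime ends $\mathbb S$ of $\Omega$ by orientation--preserving homeomorphisms. The goal is that this $G$--action on $\mathbb S$ has a global fixed prime end with a single principal point: that point lies in $K_0$, is fixed by all of $G$, and therefore contradicts the minimality of $K_0$ (it is a proper $G$--invariant full compact subset). Two ingredients should drive the argument on $\mathbb S$. First, for each $g$, nonwandering behaviour near $K_0$ ought to force the prime end rotation number of $g$ to vanish — a nonzero rotation number creates rotational recurrence on the annulus $\R^2 \setminus K_0$ incompatible with every point of $\mathcal O$ being nonwandering — so each $g$ has fixed prime ends, refining the fixed point in $K_0$ supplied unconditionally by the Cartwright--Littlewood theorem. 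Second, if the fixed prime end sets of distinct $g \in G$ had no common point, passing to a minimal set of the action on $\mathbb S$ would exhibit an attracting fixed prime end of some $g$, which maps a suitable crosscut neighbourhood strictly into itself and therefore contains wandering points of $g$ lying in $\mathcal O$ — again impossible. I expect the genuine difficulty to be in carrying out this last step rigorously, and above all in ruling out a non--degenerate $K_0$ every prime end of which has full impression, so that ``shrinking via prime ends'' gains nothing: this is exactly the case in which the nonwandering hypothesis and the sharp bound $r > \tfrac{1}{\sqrt{3}}k$ must be used in tandem, and in which any weakening of the constant ought to break the proof.
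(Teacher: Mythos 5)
Your overall architecture is close to the paper's: fill in the bounded complementary components of $GB$ to get a bounded, $G$-invariant, simply connected region; pass to the circle of prime ends; produce a $G$-fixed prime end; and then use the nonwandering hypothesis (exactly as in the Franks--Le Calvez argument the paper cites) to convert a fixed prime end into a fixed principal point. Your last step is essentially identical to the paper's, and your conjugation remark correctly upgrades the hypothesis to ``every point of $GB$ is nonwandering for every $g$.'' But there is a genuine gap at the heart of the argument: you never actually produce the global fixed prime end, and the two ingredients you propose for it do not work. Ingredient one --- that the nonwandering hypothesis forces the prime end rotation number of each $g$ to vanish --- is false as stated: a rigid rotation of the plane by $2\pi/3$ is area-preserving, so every point is nonwandering, yet it acts on the prime end circle of any invariant disk with rotation number $1/3$. (This is precisely the paper's sharpness example, Example \ref{ex1}; note that your ingredient makes no use of the constant $\frac{1}{\sqrt{3}}k$, so if it worked it would prove a false theorem with $r=\frac{1}{\sqrt{3}}k$.) Ingredient two --- extracting a common fixed prime end from the individual fixed-point sets via a minimal set on $\mathbb S$ --- is also unsubstantiated: a group of circle homeomorphisms each of whose elements has fixed points need not have a global fixed point, and you give no mechanism that rules this out. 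You candidly flag this step as the difficulty, but it is not a technical loose end; it is the main content of the theorem.

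The missing idea is the paper's use of the constant: since every boundary orbit lies outside the ball $B$ of radius $r>\frac{1}{\sqrt{3}}k$ and has diameter at most $k$, each such orbit is confined to a sector of angle $\frac{2\pi}{3}$ about the center of $B$. The paper then defines ``spanning'' intervals in the complement of an orbit closure on the circle (of boundary points, or of prime ends via their principal point sets $\princ(I)$), and uses the sector confinement together with a degree argument to show there is a \emph{unique} spanning interval, which is therefore $G$-invariant; its endpoints are the desired fixed prime ends. Your only use of the constant is Jung's theorem to keep orbits in a disk of radius $<r$, which is a containment statement with no dynamical force --- it cannot distinguish $r>\frac{1}{\sqrt{3}}k$ from $r=\frac{1}{\sqrt{3}}k$ in the way the sector-angle bound does ($2\arcsin\bigl(\frac{k}{2r}\bigr)<\frac{2\pi}{3}$ exactly when $r>\frac{1}{\sqrt{3}}k$). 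The Zorn's lemma reduction to a minimal invariant continuum $K_0$ is not in the paper and does not help here: in the rotation example the minimal invariant full continuum is a nondegenerate disk, and minimality does nothing to tame its prime end dynamics. To repair the proposal you would need to import the spanning-interval argument (or an equivalent use of the sector bound) to force the invariant gap on the prime end circle.
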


The \emph{wandering set} for a homeomorphism $g$ is the set of points $x \in \R^2$ such that there is some open set $V$ containing $x$ with all translates $g^n(V)$ (for $n \in \N$) disjoint from $V$.    
The condition that no point in the wandering set for $g$ has forward orbit contained in $B$ or a bounded component of $\R^2 \setminus GB$ is true in particular if $G$ is a group of measure preserving transformations (for any locally finite measure with full support), or if $G$ has \emph{uniformly bounded contraction} on $B$, meaning that there is some $\epsilon > 0$ so that $\|gx-gy\| > \epsilon\|x-y\|$ for any $x, y \in B$ and $g \in G$.  

The constant $\frac{1}{\sqrt{3}}k$ in the statement of Theorem \ref{thm} is sharp.  To show this, in section \ref{sharpness} we construct a finitely generated group acting on $\R^2$ by measure-preserving homeomorphisms (hence satisfying the hypothesis on the wandering set) with every orbit bounded by $k$ on a ball of radius $r = \frac{1}{\sqrt{3}}k$, but no global fixed point.  
We also discuss in more detail the hypothesis on the wandering set and give a counterexample when this condition is not satisfied.   

The paper concludes with an application of our proof technique to left invariant orders.  We prove
\begin{corollary} \label{LO}
Let $G$ be a group acting on $\R^2$ by $C^1$ diffeomorphisms, and satisfying the property that there is some constant $k$ and a ball $B \subset \R^2$ of radius $r > \frac{1}{\sqrt{3}}k$ such that $\| g(x) - h(x)\| \leq k$ for any $g, h \in G$ and $x \in B$.  Then $G$ is left orderable.
\end{corollary}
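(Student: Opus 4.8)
The plan is to deduce left orderability from the main theorem by a standard bootstrapping argument: a countable group acting faithfully on $\R$ by orientation-preserving homeomorphisms is left orderable, so it suffices to reduce the planar situation to a faithful action on the line. First I would observe that left orderability is a local property of the group in the sense that $G$ is left orderable if and only if every finitely generated subgroup is; so without loss of generality I may take $G$ to be finitely generated, hence countable. Now I apply Theorem \ref{thm}. The hypothesis that $G$ acts by $C^1$ diffeomorphisms is exactly what supplies the nonwandering hypothesis: as remarked after the theorem, uniformly bounded contraction on $B$ rules out wandering points in $B$ and in bounded components of $\R^2 \setminus GB$, and a $C^1$ diffeomorphism has locally bounded derivative, so on the compact set $\overline{B}$ (and more generally on $\overline{GB}$, which is bounded by the orbit hypothesis) the whole group $G$ — being generated by finitely many diffeomorphisms each with a uniform lower bound on $\|Dg^{-1}\|^{-1}$ over the relevant compact set — has uniformly bounded contraction. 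Thus Theorem \ref{thm} applies and $G$ fixes some point $p \in \R^2$.

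Having produced a global fixed point $p$, I would linearize: the derivative at $p$ gives a homomorphism $D_p \colon G \to \GL_2(\R)^+$, the orientation-preserving matrices. The image lies in a group that is itself left orderable (indeed $\GL_2^+(\R)$ is left orderable, being a connected Lie group whose universal cover acts faithfully on $\R$; more concretely $\GL_2^+(\R)$ acts on the circle of directions with a double cover acting on $\R$). If $D_p$ is injective we are done by pulling back a left order. The remaining case is $\ker(D_p) \neq 1$: here I would use that a $C^1$ diffeomorphism of $\R^2$ fixing $p$ with derivative the identity there behaves, near $p$, enough like the identity to be "ordered by a jet." More precisely, elements of $\ker(D_p)$ are tangent to the identity at $p$; one orders $\ker(D_p)$ by comparing the first nonvanishing term in the Taylor expansion at $p$ (a finite-dimensional space of homogeneous polynomial vector fields, on which one fixes a linear order), checking that this is a genuine left-invariant order on the group and that it is compatible with the $\GL_2^+$ order on the quotient so that the two assemble into a left order on $G$ via the short exact sequence $1 \to \ker(D_p) \to G \to D_p(G) \to 1$ (using that an extension of a left-orderable group by a left-orderable group is left-orderable).

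The main obstacle I anticipate is the last case: making the "order by jets" argument on $\ker(D_p)$ fully rigorous. One has to be careful that (a) the first nonvanishing jet of a composition $fg$ at $p$ is the sum of the first nonvanishing jets of $f$ and $g$ when these occur in the same degree (and is governed by the lower-degree one otherwise), so that the comparison is well-defined and multiplicative in the right way; (b) if $f \neq \mathrm{id}$ but all jets of $f - \mathrm{id}$ at $p$ vanish — a genuine possibility for merely $C^1$, indeed even $C^\infty$, diffeomorphisms — then this recipe fails and one needs a different device (e.g. passing to a further quotient, or using a germ at another point, or invoking that one only needs a left order on each finitely generated subgroup and arguing there). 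It is possible the intended argument sidesteps this by a cleaner route — for instance, directly extracting from the proof of Theorem \ref{thm} a $G$-invariant structure (a dynamically defined circular or linear order on some circle of prime ends at a fixed point) that yields the order without any appeal to derivatives — and I would look for that, since it would also explain the placement of the corollary at the end as "an application of our proof technique" rather than of the theorem statement.
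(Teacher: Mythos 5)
There are several genuine gaps here, and the first is fatal to the proposed route. You claim that the $C^1$ hypothesis supplies the nonwandering hypothesis because a finitely generated group of diffeomorphisms has uniformly bounded contraction on the relevant compact set. This is false: a uniform lower bound on $\|Dg^{-1}\|^{-1}$ for each \emph{generator} does not give one for arbitrary words, since contraction constants multiply under composition ($g^n$ can contract arbitrarily strongly even when $g$ is a single $C^1$ diffeomorphism). Indeed the spiraling counterexample of Section \ref{sharpness} can be realized as the time-one map of a smooth flow, so $C^1$ regularity genuinely does not rule out wandering points, and Theorem \ref{thm} cannot be invoked to produce a fixed point in $\R^2$. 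The paper's proof sidesteps this entirely --- as the remark "the nonwandering hypothesis is not necessary here" signals --- by using only the \emph{first} part of the proof of Theorem \ref{thm}: one gets a $G$-invariant simply connected set $U$ and a global fixed point for the induced action on the circle of prime ends $\pend(U)$, which requires no nonwandering assumption. Your closing speculation that the intended argument extracts a fixed point on a circle of prime ends rather than in the plane is exactly right, but it is the whole argument, not an optional cleaner route.

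The second half also has problems even granting a fixed point $p\in\R^2$. The image of $D_p$ lies in $\GL(2,\R)^+$, which is \emph{not} left orderable (it contains $-I$ and other torsion), so a subgroup of it need not be left orderable and you cannot simply pull back an order; your parenthetical about the universal cover acting on $\R$ orders $\widetilde{\GL_2^+(\R)}$, not $\GL_2^+(\R)$. The paper avoids this because in the only case where it linearizes, the subgroup $H$ fixes $\del U$ pointwise, so $p$ is an accumulation point of fixed points and the derivative image is forced into $\Aff^+(\R)$, which is left orderable. Finally, for $\ker(D_p)$ your "order by jets" cannot work for $C^1$ (there are no higher jets) and fails even in $C^\infty$ for maps flat at $p$, as you acknowledge; the missing ingredient is Thurston stability, which shows every nontrivial finitely generated subgroup of $\ker(D_p)$ surjects to $\Z$, hence (with Burns--Hale) that $\ker(D_p)$ is left orderable, after which the short exact sequence argument you describe does go through. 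The other case --- $H$ acting nontrivially on $\pend(U)$ --- is handled by cutting the circle at the fixed prime end to get a nontrivial action on $\R$, giving the surjection onto an infinite left orderable group needed for Burns--Hale.
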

The definition of left orderable is given in Section \ref{section LO} along with some discussion of left orderability.  The constant $\frac{1}{\sqrt{3}}k$ is again sharp.

\section{Proof of Theorem \ref{thm}}

The use of prime ends in the second part of this proof is inspired by Calegari's work on circular orders on subgroups of $\Homeo^+(\R^2)$ in \cite{Ca}.

Let $B$ be an open ball in $\R^2$ of radius $r > \frac{1}{\sqrt{3}}k$ satisfying the hypotheses of the theorem.  Assume without loss of generality that $B$ is centered at the origin.  Its orbit $\displaystyle GB := \bigcup \limits_{g\in G} gB$ is a $G$-invariant set.  We will study the action of $G$ on the boundary of $GB$ and find a fixed point there.  First, note that our hypothesis that $\|gx - hx\| < k$ for each $x \in B$ and $g, h \in G$ implies that $GB$ is connected.  In fact, the following stronger statement is true: 

\begin{lemma}\label{lemma}
Let $G$ be any group acting on $\R^2$ by orientation-preserving homeomorphisms.  Suppose that there is a closed ball $B$ of radius $r$ such that for all $x \in B$, the orbit $Gx$ has diameter strictly less than $2r$.  Then $GB$ is a connected set, in fact it is path-connected.  
\end{lemma}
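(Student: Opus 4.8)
The plan is to exhibit $GB$ as a union of path-connected sets each meeting a common path-connected set: each $gB$ is path-connected, being the homeomorphic image of a ball, and I will show that every $gB$ meets $B=eB$. Since a union of path-connected sets each of which meets a fixed path-connected subset is path-connected, this gives the lemma. After conjugating the action by a translation --- which changes neither the hypothesis (diameters are translation-invariant) nor the conclusion --- I may assume $B$ is the closed ball of radius $r$ about the origin.

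Everything thus reduces to the claim that $f(B)\cap B\neq\emptyset$ for every $f\in G$ (the case $f=g$ is what is needed; in fact applying it to $f=g^{-1}h$ and pushing forward by $g$ shows that all pairs $gB,hB$ meet, which is convenient elsewhere). The only consequence of the hypothesis that enters is this: for each $x\in B$ both $x=e\cdot x$ and $f(x)$ lie in the orbit $Gx$, which has diameter $<2r$, so $\|f(x)-x\|<2r$ for all $x\in B$, in particular for all $x\in\partial B$.

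To prove the claim I would argue by contradiction. Suppose $f(B)\cap B=\emptyset$. Then $\|f(x)\|>r$ for every $x\in B$, so $f$ maps the disk $B$ into $\R^2\setminus\{0\}$; hence the loop $f|_{\partial B}$, which extends over the disk $B$ inside $\R^2\setminus\{0\}$, is null-homotopic there and has winding number $0$ about the origin, while the inclusion loop $\partial B\hookrightarrow\R^2\setminus\{0\}$ has winding number $1$. Now consider the straight-line homotopy $H(x,t)=(1-t)x+tf(x)$ on $\partial B\times[0,1]$ between these two loops. If $0$ were not in the image of $H$, it would be a homotopy inside $\R^2\setminus\{0\}$, forcing $0=1$; so $H(x_0,t_0)=0$ for some $x_0\in\partial B$ and some $t_0\in(0,1)$ (the endpoints $t_0\in\{0,1\}$ are excluded since $0\notin\partial B$ and $0\notin f(B)$). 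Thus $f(x_0)=-\lambda x_0$ with $\lambda=(1-t_0)/t_0>0$, and since $\|f(x_0)\|>r=\|x_0\|$ we get $\lambda>1$, whence $\|f(x_0)-x_0\|=(\lambda+1)\,r>2r$ --- contradicting $\|f(x_0)-x_0\|<2r$. This proves the claim, and hence the lemma.

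The step I expect to be the real content is the claim $f(B)\cap B\neq\emptyset$; the passage from it to the lemma is formal. The idea behind it is that a homeomorphic copy of $B$ disjoint from $B$ must be positioned by $f$ so as to annihilate the winding number of $\partial B$ about the origin, and that this winding defect --- pinpointed by a preimage of the origin under the linear homotopy --- is witnessed by a boundary point sent onto the opposite ray at distance $>2r$, which the diameter bound forbids. (Orientation-preservation plays no role here; only that each $f$ is continuous on $B$, together with $B$ being a ball, is used.)
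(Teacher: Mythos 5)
Your proof is correct and follows essentially the same route as the paper's: reduce to showing $gB\cap B\neq\emptyset$, and derive a contradiction from $gB\cap B=\emptyset$ by comparing the winding number of $\partial B$ with that of $g|_{\partial B}$ about the origin, using the straight-line homotopy and the observation that a point sent to the opposite ray outside $B$ would be moved a distance greater than $2r$. Your write-up is in fact slightly more explicit than the paper's about why the homotopy must pass through the origin and why $\lambda>1$.
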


\begin{proof} We will show that $gB \cap B \neq \emptyset$ for any $g \in G$.  To see this, assume for contradiction that $gB \cap B = \emptyset$ and consider the map $\phi: S^1 \to S^1$ given by $\phi(x) = \frac{g(rx)}{\|g(rx)\|}$, where $S^1$ is the unit circle centered at the origin in $\R^2$ .  This map is well defined since $0 \in B$ so $g(rx) \neq 0$ for any point $rx \in \del B$.  Since the orbit of each point in $\del B$ has diameter strictly less than $2r$ (in fact, less than $\sqrt{3}r$, but $2r$ is all we need) and $gB \cap B = \emptyset$, it follows that $\phi(x) \neq -x$ for all $x \in S^1$.  This means that $\phi$ has degree $1$; indeed, 
$$\Phi(x,t) = \frac{t \phi(x) + (1-t)x}{\|t \phi(x) + (1-t)x\|}$$ 
is a homotopy between $\phi$ and the identity map.  However, our assumption that $gB \cap B = \emptyset$ implies that $\phi$ must have degree zero.  
\end{proof}

Thus, $GB$ is a connected, $G$-invariant set containing $B$ and with compact closure.  Moreover, since each point in $GB$ is in the orbit of some point of $B$, we have $\|gx - hx\| \leq k$ for all $x \in GB$ and $g, h \in G$.  The same inequality holds for each point $x$ in the boundary $\del(GB)$, since for each $g$ and $h \in G$, the set $\{x \in \R^2 : \|gx - hx\| \leq k \}$ is closed.  We may further assume that $GB$ is simply connected, for its complement contains only one unbounded component, and so the union of $GB$ with all bounded components of its complement is a connected, simply-connected, $G$-invariant set containing $B$.  Since the boundary of this simply connected region is a subset of the original boundary, each point $x$ in the new boundary also satisfies $\|gx - hx\| \leq k$ for all $g, h \in G$.  We now examine the action of $G$ on $\del(GB)$.  

Assume first that $\del(GB)$ is homeomorphic to a circle.  Since $G$ preserves $GB$, it preserves $\del(GB)$, and acts on $\del(GB)$ by orientation-preserving homeomorphisms of this circle.  We will find a fixed point on $\del(GB)$ for this action. 
Let $x$ be any point in $\del(GB)$.  Since the orbit of $x$ lies in the complement of $B$, a ball of radius $r > \frac{k}{\sqrt{3}}$, and the maximal distance between any two points in the orbit is $k$, the orbit lies entirely in a sector of angle $\frac{2\pi}{3}$ from the origin.  By this, we mean a set of the form $\{s e^{i\theta} : \alpha < \theta < \alpha+\frac{2\pi}{3}, s>0 \}$ for some real number $\alpha$.  Without loss of generality, we may assume the orbit lies in the sector $S \defeq \{ se^{i\theta} : \frac{- \pi}{3} < \theta < \frac{\pi}{3}, s>0 \}$. 

The complement of the closure of the orbit in the circle, $\del(GB) \setminus \overline{Gx}$, consists of a union of disjoint open intervals which are permuted by $G$. Say that an interval $I$ is \emph{spanning} if it is possible to join the endpoints of $I$ with a path in $S \setminus B$ so that the resulting loop is homotopic to $\del B$ in $\R^2 \setminus B$.  See Figure \ref{spanning}
for an illustration.  More generally, we say that any set $X \subset (\R^2 \setminus B)$ is spanning if there is some path $\gamma$ in $S \setminus B$ such that $\gamma \cup X$ is homotopic to $\del B$ in $\R^2 \setminus B$.  

  \begin{figure*}
         \labellist 
  \small\hair 2pt
   \pinlabel $I$ at 40 200 
   \pinlabel $GB$ at 100 210
   \pinlabel $B$ at 115 165
   \pinlabel $S$ at 230 175 
   \pinlabel $x$ at 215 126
   \endlabellist
  \centering
    \subfloat[$I \subset (\del(GB) \setminus \bar{Gx})$ is a spanning interval. \, \, \, \, \, \, \, \, \, \, Dots indicate points in the orbit of $x$]{ \label{spanning} 
    \includegraphics[width=2.2in]{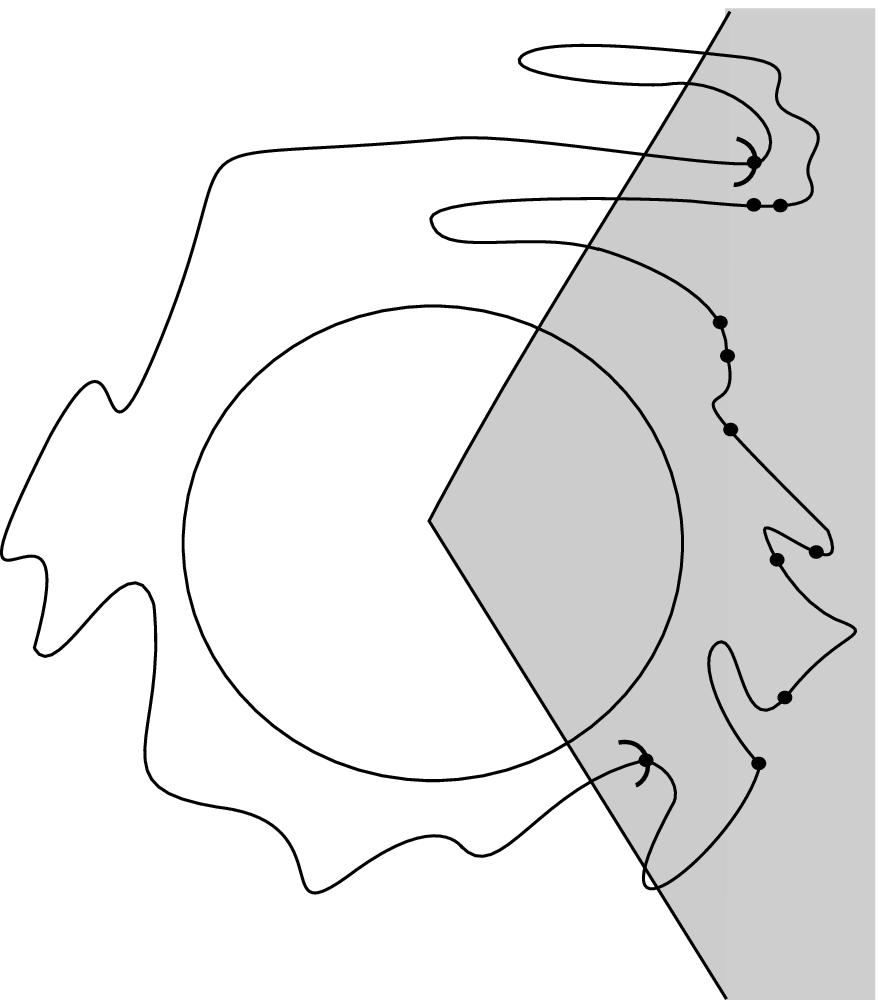}}
             \labellist 
  \small\hair 2pt
   \pinlabel $A$ at 40 240 
   \pinlabel $I$ at 62 210
   \pinlabel $x_0$ at 235 265
   \pinlabel $x_1$ at 220 35 
   \endlabellist
    \subfloat[If there are multiple spanning intervals, we have a spanning arc $A \subset int(GB)$ ``exterior" to some spanning interval $I \subset \del(GB)$] 
   { \label{unique} \includegraphics[width=2.25in]{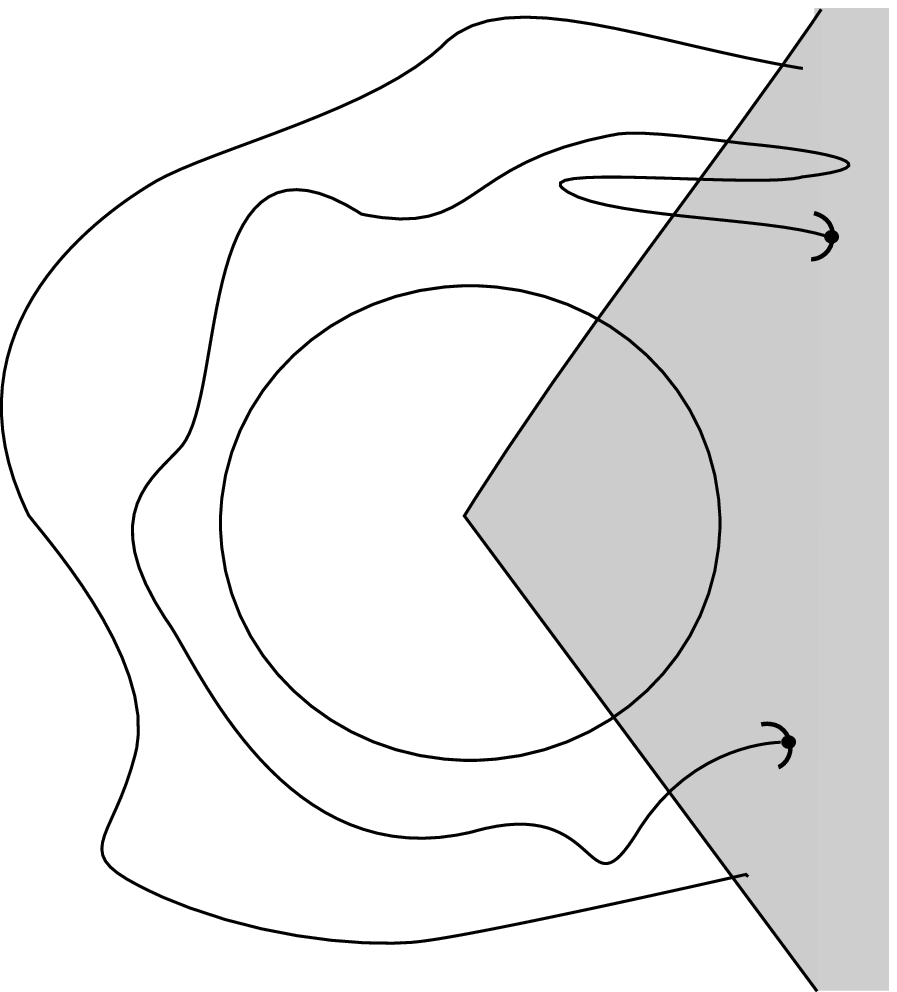}}
    \caption{Spanning intervals}
  \end{figure*}

We claim that there is only one spanning interval in $\del(GB) \setminus \overline{Gx}$.  
Since $GB$ contains $B$, there must be at least one.  If there is more than one, then let $I$ denote a spanning interval such that the unbounded component of $\R^2 \setminus (S \cup I)$ contains another spanning interval.  In particular, this implies that there is an arc $A$ of points \emph{in the interior of} $GB$ contained in the unbounded component of $\R^2 \setminus (S \cup I)$ that is also a spanning arc.  Let $x_0 = A \cap \{ a e^{i \frac{- \pi}{3}}: a \in \R \}$ and $x_1 =   A \cap \{ a e^{i \frac{\pi}{3}} : a \in \R\}$ be the endpoints of this arc.  See Figure \ref{unique}.
For some elements $g_0, g_1 \in G$, we have $x_i \in g_iB$.  By the proof of Lemma \ref{lemma}, $B \cup g_i B$ is path-connected, so there is a path $\gamma_i$ from $x_i$ to the origin in $B \cup g_iB$.  Since $GB$ is simply connected, one of these paths $\gamma_i$ must have $\gamma_i \cap (\R^2 \setminus S)$ as a spanning set: otherwise $A \cup \gamma_0 \cup \gamma_1$ is the boundary of a disc containing $I$, so is not contractible in $GB$. 

Thus, for either $i=0$ or $i=1$, we have a spanning arc $\gamma_i \cap (\R^2 \setminus S)$ in the unbounded component of $\R^2 \setminus (S \cup I)$ contained entirely in $g_i B \cup B$.  Considering now the boundary of $g_i B$, this implies that $\del (g_i B) \cap (\R^2 \setminus (S \cup B))$ contains \emph{two} disjoint spanning arcs (Figure \ref{giB}).  
However, the restriction $\|g_i x - x\| \leq k$ for all $x \in \del B$ implies that $\del (g_i B)$ has only a \emph{single} spanning arc: an easy calculation using the restriction shows that any subarc of $g_iB$ contained in $\R^2 \setminus (S \cup B)$ that spans must contain the image of the point $(-r, 0) \in \del B$ under the homeomorphism $g_i$.  This gives the desired contradiction, proving our claim that there is only one spanning interval in $\del(GB) \setminus \overline{Gx}$.

  \begin{figure*}
   \labellist 
  \small\hair 2pt
   \pinlabel $I$ at 50 140 
   \pinlabel $g_iB$ at 240 158
   \pinlabel $B$ at 122 122 
   \endlabellist
  \centerline{
    \mbox{\includegraphics[width=2.2in]{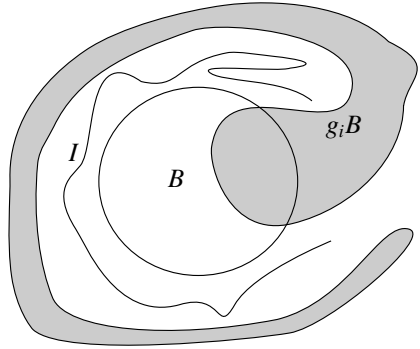}}}
 \caption{The boundary of $g_iB$ has two spanning arcs, violating $\|g_i x - x\| \leq k$ on $\del GB$}
  \label{giB}
  \end{figure*}

Abusing notation slightly, let $I$ denote the spanning interval of $\del(GB) \setminus \overline{Gx}$.  We now employ the orbit bound condition once more to show that each element of $G$ fixes $I$.  Assume for contradiction that for some $g$ we have $gI \cap I = \emptyset$.  By uniqueness of spanning intervals, $gI$ is not spanning.  Using the fact that $I$ is spanning, connect the endpoints of $I$ with a path in $S \setminus B$ to form a loop $L$ homotopic in $\R^2 \setminus B$ to the boundary of $B$.  Since $gI$ is not spanning, we may then connect its endpoints by a path in $S \setminus B$ to form a loop $L'$ nullhomotopic in $\R^2 \setminus B$.  We can also easily choose these paths to avoid the origin.  Extend the homeomorphism $g: I \to gI$ arbitrarily on these paths, giving a homeomorphism $f: L  \to L'$.  By construction, no point is mapped under $f$ to an antipodal point -- a point $a e^{i \theta} \in L \subset \R^2$ cannot have $f(a) = b e^{-i \theta}$ for any real number $b$.  This is true because of the orbit bound on points in $I$ mapping under $g$, and holds on $L \cap S$ since $f(L\cap S)$ is also contained in $S$.  See figure \ref{loops}.

  \begin{figure*}[h!]
   \labellist 
  \small\hair 2pt
   \pinlabel $I$ at 10 210 
   \pinlabel $gI$ at 77 254
   \pinlabel $0$ at 95 163 
   \endlabellist
  \centerline{ \mbox{
 \includegraphics[scale=.5]{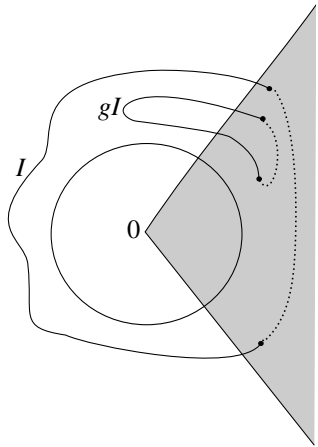}}}
 \caption{Intervals $I$ and $gI$ closed into loops}
  \label{loops}
  \end{figure*} 

Now a degree argument similar to the proof of Lemma \ref{lemma} shows that $f(L) =L'$ cannot be nullhomotopic in $\R^2 \setminus B$, giving a contradiction.  In detail, if $\pi : \R^2 \setminus \{0\} \to \del B$ is radial projection $ a e^{i \theta} \mapsto r e^{i \theta}$ then
$$\Phi(x,t) = \frac{r \left(t f(x) + (1-t)(x)\right)}{\|t f(x) + (1-t)(x)\|}$$ 
is a homotopy between the radial projection map $\pi : \R^2 \setminus {0} \to \del B$ defined by $\pi: a e^{i \theta} \mapsto r e^{i \theta}$, and the map $\pi \circ f: L \to \del B$.  (Here, we use that $L$ and $L'$ avoid the origin, and that no point maps under $f$ to an antipodal point).   However, no such homotopy exists if the loop $L'$ is nullhomotopic in $\R^2 \setminus B$.  It follows that $I$ is a $G$-invariant interval, and the endpoints of $I$ (or endpoint, if $I = \del(GB) \setminus \{x\}$) are global fixed points for $G$.  This proves the theorem when $\del(GB)$ is homeomorphic to a circle.

In the case where $\del(GB)$ is not homeomorphic to a circle, we can modify the argument above using the theory of prime ends.  Prime ends are a tool from conformal analysis developed to study the boundary behavior of conformal maps defined on open regions in the complex plane.  Here we will use them to turn the action of $G$ on $\del(GB)$ into an action on the circle.  There are several equivalent definitions of prime ends; we follow Pommerenke.  The reader may refer to \cite{Po} or \cite{Ma} for more details.  

For an open set $U$ in $\R^2$, define a \emph{crosscut} in $U$ to be an open Jordan arc $C$ such that $\overline{C}\setminus C$ consists of one or two points on the boundary of $U$.  Define a \emph{null chain} in $U$ to be a sequence of crosscuts $C_i$ satisfying 
\begin{enumerate}
\item $\overline{C_i}\cap \overline{C_{i+1}} = \emptyset$
\item $C_i$ separates $C_0$ from $C_{i+1}$ in $U$
\item The diameter of $C_i$ converges to 0 as $i$ goes to infinity.  
\end{enumerate} 
A \emph{prime end} of $U$ is an equivalence class of null chains, where $\{C_i\}$ is equivalent to $\{C'_i\}$ if for all $m$ sufficiently large there is a $n$ such that $C_m$ separates $C'_0$ from $C'_n$, and $C'_m$ separates $C_0$ from $C_n$.   We let $\pend(U)$ denote the set of all prime ends of $U$.  We need only concern ourselves with the case where $U$ is simply connected and has simply connected, compact closure and will also assume that the boundary of $U$ is equal to the \emph{frontier} $\overline{U} \setminus U$, although in general one needs to work with the frontier rather than the usual boundary.  

If $\del U$ is locally connected, the prime ends of $U$ correspond exactly to proper homotopy classes (fixing the boundary) of rays in $U$.  In particular, for the unit disc $D$ we have $\pend(D) = S^1$.  When $\del U$ is not locally connected prime ends are not so well behaved.  To each prime end $e \in \pend(U)$ we can associate a \emph{subset} of $\del U$ called the \emph{principal points} of $e$.  A \emph{principal point} for $e$ is an accumulation point of a sequence $x_i$ where $x_i \in C_i$ for some chain $\{C_i\}$ representing $e$.  It is not hard to show that the set of principal points of any prime end is nonempty and connected.  It is also known that the set of points in $\del U$ which are the \emph{sole} principal point of some prime end is a dense set in $\del U$.  But it is possible for a single prime end to have a continuum of principal points, and for infinitely many distinct prime ends to share the same set of principal points.  So principal points do not define a one-to-one correspondence of prime ends with points on the boundary.   

However, there is a noncanonical one-to-one correspondence between prime ends of $U$ and points of $S^1$.  This correspondence is obtained by choosing a conformal map $f: U \to D$, which exists by the Riemann mapping theorem.  The map $f$ induces a map $\hat{f}$ on prime ends, since for any null chain $\{C_i\}$ representing a prime end of $U$ the sequence $\{f(C_i)\}$ is a null chain in $D$, so represents a prime end of $D$.  It is a theorem of Carath\'eodory that this map is a bijection $\pend(U) \to S^1$ \cite{Car}.   Moreover, there is a natural topology on $\pend(U)$ that makes $\hat{f}$ a homeomorphism and an extension of the homeomorphism $f: U \to D$.  A basis for this topology consists of sets of the form $\hat{V} := \{ [\{C_i\}] : C_n \subset V \text{ for } n \text{ sufficiently large} \}$, where $V$ is an open set in $U$.  In particular, when $U = D$ this topology on $\pend(D)$ agrees with the usual topology on $S^1$.  

Now to apply this theory to our situation.  Let $U = GB$, which we may again assume to be simply connected.  Replacing $GB$ with $\interior(\overline{GB})$ if necessary, we can also assume that $\del U = \overline{U} \setminus U$; for $U$ and $\del U$ will still be $G$-invariant sets.  Fix a conformal map $f: U \to D$.  Since $G$ acts by orientation-preserving homeomorphisms on $U$, the induced action $\hat{f}G \hat{f}^{-1}$ on $S^1$ is by orientation-preserving homeomorphisms as well.  
As in the previous part of the proof, we will find an invariant interval and therefore a fixed point for the action on $S^1$.  The idea is to use the condition $\|gx - hx\| \leq k$ on principal points of prime ends in $\R^2$ to reproduce the argument from the simple case above.  

Let $e$ be a prime end of $U$ with a single principal point $x \in \del U$.  As in the case where $\del(GB)$ is homeomorphic to a circle, we may assume that the orbit of $x$ lies in the sector $S = \{ s e^{i\theta} : \frac{- \pi}{3} < \theta < \frac{\pi}{3} \}$. 

The complement of the closure of the orbit of $e$ in the circle of prime ends consists of a union of open intervals in which are permuted by $G$.  We modify our original definition of ``spanning" to the following:  For any interval $I$ of prime ends, let $ \princ(I) \subset \R^2$ denote the union of all principal points of prime ends in $I$.  Then $I$ is said to be \emph{spanning} if there is some path $\gamma$ in $S \setminus B$ such that $ \princ(I) \cup \gamma$ is homotopic to $\del B$ in $\R^2 \setminus B$.  The argument that there is a unique spanning interval and that this interval must be fixed by the action of $G$ can now be carried through just as in the case where $\del(GB)$ is homeomorphic to a circle.  One simply works with the sets $ \princ(I)$ in the plane instead of the intervals themselves.  In the final step, one needs to replace $ \princ(I)$ with (for instance) a projection onto a nearby Jordan curve (and $ \princ(gI)$ as well) in order to close these into loops $L$ and $L'$.  But this is easily done, and the rest of the proof applies nearly verbatim.  

Thus there is some $G$-invariant interval for the action $\hat{f}G \hat{f}^{-1}$ on $\pend(U) = S^1$.   Since $\hat{f}G \hat{f}^{-1}$ acts by orientation-preserving homeomorphisms on $S^1$, the endpoint(s) of this interval in $\pend(U)$ must be fixed by $G$.  However, a fixed prime end does not ensure a fixed point for the action of $G$ on $\R^2$: a counterexample is given in Example \ref{ex2} below.  In order to find a fixed point for the action of $G$ on the plane we must use the nonwandering hypothesis.  

Let $e$ be a prime end fixed by $G$ and let $\{C_i\}$ be a null chain representing $e$, and let $p$ be a principal point of $e$.  Then for any $g$ in $G$, $\{g(C_i)\}$ defines an equivalent null chain.  Following an argument in \cite{FL}, we will show that $g(C_i) \cap C_i \neq \emptyset$.  This together with the fact that the diameters of the $C_i$ and $g(C_i)$ tend to zero shows that any limit point of a sequence $x_i \in C_i$, and in particular $p$, is a point fixed by $g$.  To see that $g(C_i) \cap C_i \neq \emptyset$, note that each crosscut $C_i$ divides $U$ into two components, and let $U_i$ be the component that contains $C_n$ for $n>i$.  If $C_i \cap g(C_i) = \emptyset$, the equivalence of the null chains $\{(C_i)\}$ and $\{g(C_i)\}$ implies that $g(U_i) \subset U_i$ or $U_i \subset g(U_i)$.  In the first case, there is an open set $V \subset U_i \setminus gU_i$, and so the images $g^n(V)$ are all disjoint from $V$ and contained in $U_i$, contradicting our hypothesis that no wandering point for $g$ had forward orbit in $U$.  If instead $U_i \subset g(U_i)$, the sets $g^{-n}(V)$ are disjoint from $V$, which is again a contradiction.  Since $g$ was arbitrary, $p$ is a global fixed point for the action of $G$ on $\R^2$.  

\hfill $\square$

\section{Sharpness of the hypotheses} \label{sharpness}

In this section we show that the constant $\frac{1}{\sqrt{3}}k$ in the statement of Theorem \ref{thm} is sharp and that a hypothesis on the wandering set is necessary.    

To demonstrate sharpness of the constant, here is an example of an action on $\R^2$ by a finitely generated group of measure-preserving, orientation-preserving homeomorphisms with all orbits on a ball of diameter $\frac{1}{\sqrt{3}}k$ bounded by $k$ but no global fixed point.  Since the action preserves the standard Lebesgue measure on $\R^2$, it satisfies the nonwandering hypothesis in the theorem.  
\begin{example} \label{ex1}
Let $G \subset \Homeo^+(\R^2)$ be generated by $a$ and $b$, where $a$ is a rotation by $2\pi/3$ about the origin, and $b$ is any orientation-preserving, measure-preserving homeomorphism that does not fix the origin, but fixes each point in the complement of some small disc of radius $\rho$ centered at the origin.  Since the only point fixed by $a$ is 0, but 0 is not fixed by $b$, there is no global fixed point.  However, every point in $\R^2$ has a bounded $G$-orbit, for the disc of radius $\rho$ is $G$-invariant and each point in the complement of the disc has a three point orbit.  Moreover, on any ball $B$ centered at $0$ and of radius $\frac{1}{\sqrt{3}}k$ with $\frac{1}{\sqrt{3}}k > \rho$, each point $x$ in $B$ satisfies $\|gx - hx\| \leq k$ for all $g, h \in G$.  

\end{example}

It remains to discuss the hypothesis on the wandering set of $G$.   Note first that the proof of Theorem \ref{thm} still goes through if rather than assuming that no point in the wandering set for $g$ is contained in $B$ or any bounded component of $\R^2 \setminus GB$, we require only that for each $g \in G$, there is a neighborhood of $\del U$ in $U$ that does not contain the forward orbit of any wandering point for $g$.  In the case where $G$ is finitely generated, we need only require this of the generators.  

This assumption is necessary.  There are examples of homeomorphisms of $\R^2$ that leave invariant a bounded domain $U$ and fix a prime end of $U$ but do not fix any point on the boundary of $U$.  One such example is given by Barge--Gillette in \cite{BG}.  Their key construction is a region which spirals infinitely many times around a circle.  A homeomorphism that rotates the circle and shifts points along the spiraling arm will fix the prime end corresponding to the spiraling arm, but not fix any point on the boundary of the arm (Figure \ref{arm}).  

   \begin{figure*}[h]
  \centerline{
    \mbox{\includegraphics[width=1.0in]{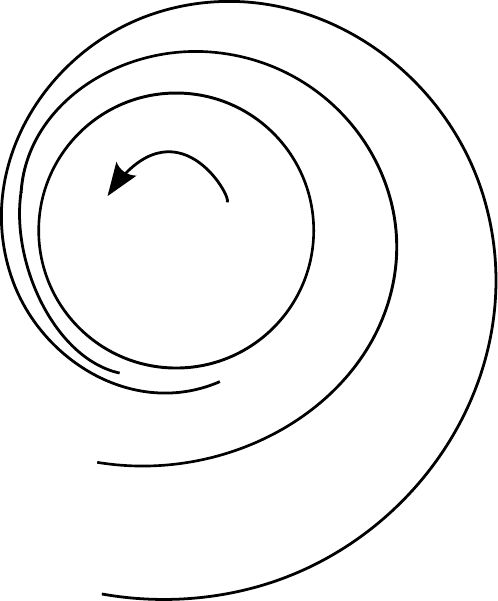}}}
\caption{A homeomorphism of a region spiraling infinitely many times around a circle}
  \label{arm}
  \end{figure*}
  
  \newpage

Using this as a building block, we will construct a group of orientation-preserving homeomorphisms of $\R^2$ generated by two elements such that for all points $x$ in a ball of radius $r > \frac{1}{\sqrt{3}}k$ we have $\|gx - hx\| \leq k$, but violating the nonwandering hypothesis and with no global fixed point.    

   \begin{figure*}[h]
  \centerline{
    \mbox{\includegraphics[width=3.2in]{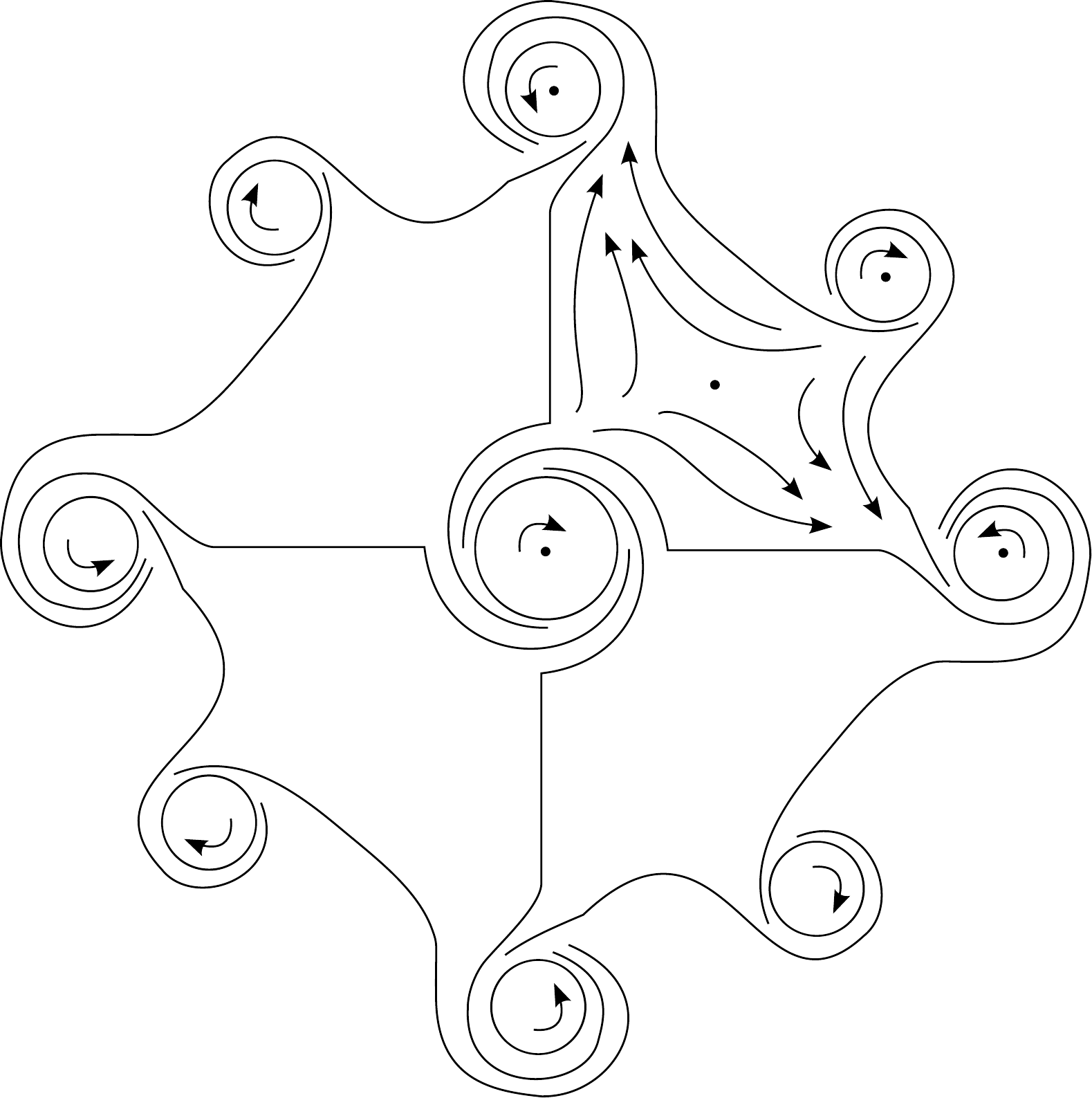}}}
 \caption{The action of $a$ on a region in $\R^2$}
  \label{ctx}
  \end{figure*}

\begin{example} \label{ex2} Consider a group $G$ generated by two homeomorphisms, $a$ and $b$.  The homeomorphism $a$ acts on the region shown in Figure \ref{ctx} by rotating each circle a fixed amount in the direction indicated, and shifting other points along the flow lines, which spiral infinitely many times around each circle.  The action of $a$ on the complement of the region can be arbitrary.  Now let $b$ be any homeomorphism which does not fix any of the (isolated) fixed points of $a$ and acts as the identity outside of a small neighborhood of each fixed point of $a$.  The region shown in Figure \ref{ctx} is then a $G$-invariant set.  Let $k$ be such that the diameter of the largest ball contained in the region is $\frac{1}{\sqrt{3}}k + \epsilon$ for some $\epsilon > 0$.  By making the circles rotated by $a$ sufficiently small, we can ensure that the diameter of the orbit of any point in the ball is bounded by $k$.  However, there is no global fixed point for the action.  
\end{example}

\section{Actions by diffeomorphisms and left-invariant orders} \label{section LO}

As an application of the proof of Theorem \ref{thm}, we show in this section that a group acting on the plane by orientation-preserving diffeomorphisms with orbits bounded as in the statement of Theorem \ref{thm} is left orderable.  The nonwandering hypothesis is not necessary here.  This result (Corollary \ref{LO}, restated below) is closely related to, but does not follow immediately from, the results of Calegari in \cite{Ca} where it is shown that any group acting on the plane by $C^1$ diffeomorphisms and leaving invariant two disjoint, compact, simply connected sets is left orderable, and that any group acting on $\R^2$ with a bounded orbit is circularly orderable (a weaker notion than left orderability).  The constant $\frac{1}{\sqrt{3}}k$ is again sharp.  

A group $G$ is said to be \emph{left orderable} if it admits a total order $<$ such that for any $a, b \in G$, $a<b$ if and only if $ca <cb$ for all $c \in G$.  It is not difficult to show that for countable groups, being left orderable is equivalent to admitting a faithful representation into $\Homeo^+(\R)$.  Some examples of left orderable groups are $\Z$, surface groups, free groups, the group $\Aff^+(\R)$ of orientation-preserving affine transformations of the line, and braid groups (see \cite{Ca}).  In fact, any finitely generated, residually torsion-free, nilpotent group is left orderable.  Additionally, a subgroup of a left orderable group is left orderable, for it inherits any left order on the group.  Some groups which are not left orderable include groups with torsion elements, finite index subgroups of $\SL(n,\Z)$ for $n \geq 3$, and more generally, arithmetic subgroups of simple algebraic groups over $\Q$ of $\Q$-rank $\geq 3$ \cite{Wi}.  

We will use the proof of Theorem \ref{thm} to show that a group acting on $\R^2$ with orbits bounded as in Theorem \ref{thm} is left orderable.  The tools that we need for this are the following basic fact and two theorems:

\begin{fact}[Lemma 2.19 in \cite{Ca}] \label{fact}
Left orderability behaves well under short exact sequences: if $A$ and $C$ are left orderable groups and we have a short exact sequence 
$$1 \to A \to B \to C \to 1$$ 
then $B$ must also be left orderable. 
\end{fact}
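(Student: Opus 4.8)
The plan is to argue with \emph{positive cones}. Recall that a left order on a group $G$ is the same data as a subsemigroup $P \subseteq G$ (the positive cone) satisfying $G = P \sqcup \{e\} \sqcup P^{-1}$: one recovers the order by declaring $a < b$ exactly when $a^{-1}b \in P$, and left-invariance is automatic since $(ca)^{-1}(cb) = a^{-1}b$. So it suffices, given positive cones $P_A \subseteq A$ and $P_C \subseteq C$, to build one for $B$.

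Identify $A$ with its image in $B$, let $\pi \colon B \to C$ be the quotient map, and set
$$ P_B \defeq \pi^{-1}(P_C) \cup P_A. $$
Since $A = \ker \pi$ and $e \notin P_C$, these two pieces are disjoint, and the work is to check $P_B$ is a positive cone. For closure under multiplication, take $b, b' \in P_B$ and apply $\pi$: if at least one of $\pi(b), \pi(b')$ lies in $P_C$ then $\pi(bb')$ does too, since the other factor contributes to the product either an element of $P_C$ or the identity, so $bb' \in \pi^{-1}(P_C)$; and if both $b, b' \in A$ then $\pi(bb') = e$ and $bb' \in P_A$ because $P_A$ is a subsemigroup of $A$. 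For the trichotomy $B = P_B \sqcup \{e\} \sqcup P_B^{-1}$: if $\pi(b) \neq e$ then exactly one of $\pi(b), \pi(b)^{-1} = \pi(b^{-1})$ lies in $P_C$, which places $b$ in exactly one of $\pi^{-1}(P_C)$ and $\pi^{-1}(P_C)^{-1}$ (and in neither $\{e\}$ nor $P_A$); if $\pi(b) = e$, i.e. $b \in A$, then the trichotomy for $b$ is exactly the one inherited from $A = P_A \sqcup \{e\} \sqcup P_A^{-1}$. The resulting order on $B$ is the lexicographic order "$C$ first, then $A$" — it makes $A$ order-convex with its original order and $B/A \cong C$ with its original order — but none of this refinement is needed, only the bare existence of $P_B$.

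I do not expect a genuine obstacle here; the statement is essentially formal. The one point that deserves care is the disjointness assertions inside $B = P_B \sqcup \{e\} \sqcup P_B^{-1}$, where one must keep $b$ and its image $\pi(b)$ conceptually separate and use that $e \notin P_C$ together with the fact that $P_A$ is itself a positive cone; everything else is a direct diagram chase. (It is worth noting that normality of $A$ in $B$ plays no role — only that $A$ is a subgroup and $P_A$ a subsemigroup of it — although in the short exact sequence at hand $A$ is of course the normal kernel.)
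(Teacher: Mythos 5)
Your proof is correct: the positive-cone construction $P_B = \pi^{-1}(P_C) \cup P_A$ is the standard lexicographic extension argument, and your verification of closure and of the trichotomy $B = P_B \sqcup \{e\} \sqcup P_B^{-1}$ is complete (the case split on whether $\pi(b) = e$ is exactly the point that needs care, and you handle it). The paper itself offers no proof of this Fact --- it is quoted directly from Calegari's Lemma 2.19 --- and your argument is essentially the proof given there, so there is nothing to contrast; the only minor quibble is your closing parenthetical, since the trichotomy step does use that $A$ is precisely $\ker\pi$ (hence normal), even though, as you correctly observe, no conjugation-invariance of $P_A$ is ever invoked.
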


\begin{theorem}[Burns--Hale \cite{BH}] If a group $G$ satisfies the condition that every finitely generated subgroup of $G$ surjects to an infinite left orderable group, then $G$ is left orderable.  
\end{theorem}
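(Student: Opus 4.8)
The plan is to prove this through a compactness characterization of left orderability combined with an induction on the size of finite subsets; this is exactly the point where the hypothesis ``surjects onto an infinite left orderable group'' will be consumed. Recall that $G$ is left orderable if and only if it admits a \emph{positive cone}: a subsemigroup $P \subseteq G$ with $G = P \sqcup \{1\} \sqcup P^{-1}$. The first step is to establish the local criterion that $G$ is left orderable if and only if for every finite subset $\{g_1, \dots, g_n\} \subseteq G \setminus \{1\}$ there is a choice of signs $\epsilon_i \in \{\pm 1\}$ such that $1$ does not lie in the subsemigroup generated by $g_1^{\epsilon_1}, \dots, g_n^{\epsilon_n}$. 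The forward direction is immediate by reading off signs from a positive cone. For the converse I would run a Tychonoff argument on the compact space of antisymmetric sign assignments $s \colon G \setminus \{1\} \to \{\pm 1\}$ (those with $s(g^{-1}) = -s(g)$): for each finite $F$ the set of $s$ whose restriction avoids $1$ in the generated semigroup is clopen, these sets enjoy the finite intersection property (a witness for $F_1 \cup \cdots \cup F_k$ witnesses each $F_j$), and any point $s$ in the total intersection assembles into a positive cone $P_s = s^{-1}(+)$, since $g, h \in P_s$ would force $gh \in P_s$ lest the forbidden relation $g \cdot h \cdot (gh)^{-1} = 1$ appear in the semigroup generated from $\{g, h, gh\}$.

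Second, I would verify the finite sign condition by induction on $n$. Given $\{g_1, \dots, g_n\}$, set $H = \langle g_1, \dots, g_n \rangle$ and invoke the hypothesis to obtain a surjection $\phi \colon H \twoheadrightarrow L$ onto an infinite, hence nontrivial, left orderable group; fix a positive cone $Q$ of $L$. Since $L$ is nontrivial and is generated by the images $\phi(g_i)$, at least one generator maps outside $\ker\phi$, so after reindexing we may take $\phi(g_1), \dots, \phi(g_m)$ nontrivial with $m \ge 1$ and $g_{m+1}, \dots, g_n \in \ker\phi$. For $i \le m$ I choose $\epsilon_i$ so that $\phi(g_i)^{\epsilon_i} \in Q$. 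The remaining generators lie in $\ker\phi \setminus \{1\}$, a subset of $G \setminus \{1\}$ of size $n - m < n$, so the inductive hypothesis supplies signs $\epsilon_{m+1}, \dots, \epsilon_n$ making their generated semigroup avoid $1$.

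The combinatorial heart is checking that the combined choice works. Suppose some nonempty product of the chosen $g_i^{\epsilon_i}$ equalled $1$. Applying $\phi$ kills the kernel factors and sends each surviving factor into $Q$: if any non-kernel generator appeared, the image would be a nonempty product of elements of $Q$, hence in $Q$ and not equal to $1$; if only kernel generators appeared, the product would lie in the semigroup already handled by the inductive hypothesis, again not $1$. Either alternative contradicts the assumption, so no such product exists and the induction closes.

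I expect the main obstacle to be the compactness step rather than the induction: one must set up the space of sign assignments and the clopen constraint sets so that a single point of the intersection genuinely reconstitutes a positive cone, the delicate part being closure under multiplication, which I localize to the triple $\{g, h, gh\}$. Once that interface is secured, the inductive step is clean bookkeeping that strips off the non-kernel generators, and the count strictly drops precisely because the target $L$ is nontrivial. Feeding the resulting finite condition back through the characterization then produces a positive cone for all of $G$, completing the proof.
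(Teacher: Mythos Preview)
The paper does not prove this statement: Burns--Hale is quoted as a black box from \cite{BH} and then applied. So there is no ``paper's own proof'' to compare against.

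That said, your argument is correct and is essentially the standard proof. The two ingredients are exactly the right ones: the Conrad-type finite criterion (left orderability is equivalent to the statement that every finite subset of $G\setminus\{1\}$ admits a choice of signs for which the generated subsemigroup misses $1$), established by a Tychonoff argument on sign assignments, followed by the induction in which a surjection $\phi\colon H\twoheadrightarrow L$ strips off at least one generator and pushes the rest into the kernel. Your verification that $P_s$ is closed under multiplication via the triple $\{g,h,gh\}$ is the standard trick, and the dichotomy in the inductive step (``some non-kernel factor survives under $\phi$'' versus ``only kernel factors appear'') is clean.

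Two small points worth making explicit if you write this up. First, the space of \emph{antisymmetric} sign assignments is nonempty only because $G$ is torsion-free, which you are implicitly using; this follows from the hypothesis since a finite cyclic subgroup cannot surject onto an infinite group. Second, to get the finite intersection property inside the antisymmetric locus you need that a sign choice witnessing a symmetric finite set $F=F^{-1}$ is automatically antisymmetric on $F$; this holds because $g^{\epsilon}\cdot (g^{-1})^{\epsilon}=1$ would otherwise lie in the semigroup. Neither of these is a gap in your reasoning, but both are steps a careful reader would want spelled out.
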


\begin{theorem}[Thurston stability \cite{Th}]
Let $G$ be a group of $C^1$ diffeomorphisms of $\R^n$ with global fixed point $p \in \R^n$.  Let $D : G \to \GL(n,\R)$ be the homomorphism given by taking the derivative of the action of each element $g$ at $p$, i.e. considering the linear action of $g$ on $T_p\R^n$. Then any nontrivial finitely generated subgroup of $\ker(D)$ surjects to $\Z$. 
\end{theorem}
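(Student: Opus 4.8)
The plan is to reduce the statement to the construction of a single nonzero homomorphism from the given subgroup to $\R$. Suppose $H = \langle h_1, \dots, h_m\rangle$ is a nontrivial finitely generated subgroup of $\ker(D)$, and suppose we have produced a nonzero homomorphism $\bar\phi : H \to \R$. Its image is a nontrivial finitely generated subgroup of $(\R,+)$, hence isomorphic to $\Z^d$ for some $d \ge 1$; composing with a coordinate projection $\Z^d \to \Z$ then gives the desired surjection. So the entire difficulty is concentrated in manufacturing a nonzero homomorphism $H \to \R$, and for this I would first build one valued in $\R^n$ and afterward post-compose with any linear functional that does not annihilate its image.

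After translating so that $p = 0$, the hypothesis $h \in \ker(D)$ says exactly that $Dh(0) = I$, so writing $h(x) = x + \psi_h(x)$ we have $\psi_h(x) = o(\|x\|)$ as $x \to 0$. The key idea is \emph{renormalization at the fixed point}. I would choose a sequence $x_n \to 0$ with $x_n \neq 0$ at which some generator is genuinely displaced, set $\tau_n \defeq \max_i \|h_i(x_n) - x_n\| > 0$, and define $\Phi_n : H \to \R^n$ by $\Phi_n(h) = (h(x_n) - x_n)/\tau_n$. Crucially, $\Phi_n$ is honestly a function of the \emph{group element} $h$ (one simply evaluates the diffeomorphism at $x_n$), so it automatically respects every relation in $H$; this sidesteps any need to track a presentation. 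By construction $\max_i \|\Phi_n(h_i)\| = 1$, so after passing to a subsequence we may assume $\Phi_n(h_i) \to v_i \in \R^n$ for each generator, with $\max_i \|v_i\| = 1$; in particular the $v_i$ do not all vanish.

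It remains to show that $h \mapsto \lim_n \Phi_n(h)$ exists and is a homomorphism. The mechanism is that displacement is \emph{additive to leading order}: if $s$ is a single generator or its inverse and $w$ is any element, then $sw(x_n) - x_n = \big(s(y) - y\big) + \big(w(x_n) - x_n\big)$ with $y = w(x_n)$, so after dividing by $\tau_n$ the difference between $\Phi_n(sw)$ and $\Phi_n(s) + \Phi_n(w)$ equals $\big(\psi_s(y) - \psi_s(x_n)\big)/\tau_n$. Writing $\psi_s(y) - \psi_s(x_n) = \int_0^1 D\psi_s\big(x_n + t(y-x_n)\big)(y - x_n)\, dt$ and setting $\delta_n \defeq \sup\{\|Ds(z) - I\| : s \in \{h_i^{\pm 1}\},\ \|z\| \le 2\|x_n\|\}$, this error is at most $\delta_n \|y - x_n\|/\tau_n$. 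Expanding a fixed word of length $\ell$ one generator at a time, the intermediate points stay within $O(\ell\,\tau_n)$ of $x_n$ — here I use $\tau_n = o(\|x_n\|)$, a consequence of $\psi_i(x) = o(\|x\|)$, which keeps these points inside the ball $\|z\| \le 2\|x_n\|$ for large $n$ — and the accumulated error is $O(\ell^2 \delta_n) \to 0$. Hence $\Phi_n(h)$ converges for every $h \in H$ to the value prescribed by the additive rule, and the limit $\phi : H \to \R^n$ is a homomorphism with $\phi(h_i) = v_i$, so $\phi \neq 0$.

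I expect the main obstacle to be precisely this uniform error control: one must guarantee that the higher-order terms, summed over an entire word and then normalized by $\tau_n$, still tend to zero. This is exactly where the $C^1$ hypothesis is indispensable — it is continuity of the derivatives at the fixed point that forces $\delta_n \to 0$, since $Ds(0) = I$ for each generator and its inverse — and it is why the theorem genuinely fails for mere homeomorphisms. A secondary point to dispose of is the existence of the renormalizing sequence $x_n$ with $\tau_n > 0$: this requires $H$ to be nontrivial as \emph{germs} at $p$, so for the literal statement one either interprets nontriviality near $p$ or separately handles the case in which every element of $H$ is the identity on a neighborhood of $p$.
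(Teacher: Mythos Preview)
The paper does not prove this theorem; it is quoted from Thurston \cite{Th} and used as a black box in the proof of Corollary~\ref{LO}. So there is no ``paper's own proof'' to compare against.

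That said, your argument is the standard Thurston renormalization proof and is essentially correct. The mechanism --- rescale the displacements $h(x_n)-x_n$ by the maximum generator displacement $\tau_n$, pass to a subsequential limit, and show the limit is additive because the first-order error $\big(\psi_s(y)-\psi_s(x_n)\big)/\tau_n$ is controlled by $\delta_n = \sup\|Ds - I\|$ on shrinking balls --- is exactly right, and you correctly identify continuity of the derivative at $p$ as the place where $C^1$ is used. Your bookkeeping for words of length $\ell$ (intermediate points within $O(\ell\tau_n)$ of $x_n$, cumulative error $O(\ell^2\delta_n)$) is the right shape; one should perhaps be a touch more careful that $\tau_n = o(\|x_n\|)$ really keeps all intermediate points inside the ball of radius $2\|x_n\|$ for \emph{every} fixed $\ell$ once $n$ is large, but this is routine.

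You are also right to flag the germ issue: the renormalization requires some generator to move points arbitrarily close to $p$, i.e.\ nontriviality as germs at $p$. The theorem as stated in the paper is slightly informal on this point. In the paper's application this is harmless, since there $H$ fixes $\partial U$ pointwise and $p\in\partial U$, so any neighborhood of $p$ meets the interior $U$ on which $H$ (if nontrivial at all) acts nontrivially; one can then choose a nearby fixed point at which the germ is nontrivial and run the argument there.
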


\begin{remark} The condition that every finitely generated subgroup of a group $G$ surjects to $\Z$ is called \emph{local indicability}.  It follows immediately from Burns--Hale is that locally indicable implies left orderable; however, the converse is not true.  
\end{remark}

Combining Burns--Hale and Thurston stability shows in particular that $\ker(D)$ is left orderable.  We can now prove our orderability result, which we restate here for convenience.  

\begin{LOcor}
Let $G$ be a group acting on $\R^2$ by $C^1$ diffeomorphisms, and satisfying the property that there is some constant $k$ and a ball $B \subset \R^2$ of radius $r > \frac{1}{\sqrt{3}}k$ such that for any $g, h \in G$ and $x \in B$ we have $\| g(x) - h(x)\| \leq k$.  Then $G$ is left orderable.
\end{LOcor}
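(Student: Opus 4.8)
The plan is to deduce the corollary from the Burns--Hale theorem: it is enough to show that every nontrivial finitely generated subgroup $H \leq G$ surjects onto an infinite left orderable group. Fix such an $H$. It inherits the hypotheses of the corollary with the same ball $B$ and constant $k$, so the construction in the proof of Theorem \ref{thm} applies to $H$ with $U = HB$ in place of $GB$ (passing to the simply connected modification with $\del U = \overline{U}\setminus U$, just as there). That construction produces an $H$-invariant interval in the circle of prime ends $\pend(U)\cong S^1$, and hence a prime end $e$ fixed by $H$; the nonwandering hypothesis entered only at the very last step of that proof --- promoting a fixed prime end to a planar fixed point --- so it plays no role up to here. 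Cutting $\pend(U)\cong S^1$ at the fixed point $e$ yields a homomorphism $\bar\rho\colon H \to \Homeo^+(\R)$.

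If $\bar\rho$ is nontrivial we are done, since then $\bar\rho(H)$ is a nontrivial countable subgroup of $\Homeo^+(\R)$, hence torsion-free --- so infinite --- and left orderable, and $H$ surjects onto it. So assume $\bar\rho$ is trivial, meaning every $h\in H$ fixes every prime end of $U$. A homeomorphism fixing a prime end whose set of principal points is a single point $\{p\}$ must fix $p$: if $x_i\to p$ with $x_i$ on crosscuts of a null chain representing the prime end, then $h(x_i)\to h(p)$ along crosscuts of a null chain representing the same prime end, so $h(p)$ is again a principal point of it and therefore equals $p$. Since such points $p$ are dense in $\del U$, continuity forces every $h\in H$ to fix $\del U$ pointwise; in particular $H$ has a global fixed point $p\in\del U$.

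Apply Thurston stability at $p$ to get a homomorphism $D\colon H\to\GL^+(2,\R)$, $h\mapsto Dh_p$. The key step is to extract something from $D$ despite the fact that $\del U$ may be a wild continuum at $p$. Since $\del U$ is a nondegenerate continuum it has no isolated points, so one can choose $x_n\in\del U$ with $x_n\to p$ and $(x_n-p)/\|x_n-p\|\to\hat v$ for a unit vector $\hat v$; since $h(x_n)=x_n$ and $h(p)=p$, differentiability of $h$ at $p$ forces $Dh_p(\hat v)=\hat v$ for every $h\in H$. Hence $D(H)$ lies in the stabilizer $P$ of $\hat v$ in $\GL^+(2,\R)$, on which $\det\colon P\to\R_{>0}$ is a surjective homomorphism with kernel the one-parameter group of shears fixing $\hat v$, isomorphic to $(\R,+)$; so $P$ is an extension of $\R_{>0}$ by $\R$ and, by Fact \ref{fact}, is left orderable (in fact $P\cong\Aff^+(\R)$). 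Now three cases: if $\det\circ D$ is nontrivial, $H$ surjects onto a nontrivial --- hence infinite, torsion-free abelian, left orderable --- subgroup of $\R_{>0}$; if $\det\circ D$ is trivial but $D$ is not, $H$ surjects onto a nontrivial subgroup of the shear group $\cong(\R,+)$, again infinite and left orderable; and if $D$ is trivial then $H=\ker D$, so Thurston stability makes the nontrivial finitely generated group $H$ surject onto $\Z$. In each case $H$ has an infinite left orderable quotient, and Burns--Hale completes the proof. (Alternatively, in the last case: $\ker D$ is left orderable by Thurston stability together with Burns--Hale, $D(H)\leq P$ is left orderable, and Fact \ref{fact} applied to $1\to\ker D\to H\to D(H)\to 1$ gives $H$ left orderable directly.)

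The main obstacle is the case $\bar\rho$ trivial: the fixed prime end carries essentially no topological information about $\del U$ near $p$, and everything has to be wrung out of the single linear constraint that all the derivatives $Dh_p$ fix the same limiting directions of $\del U$ at $p$ --- the point being that this already confines $D(H)$ to a copy of the left orderable group $\Aff^+(\R)$. Finally, sharpness of the constant $\frac{1}{\sqrt{3}}k$ is witnessed by the group of Example \ref{ex1}, which contains an element of order $3$ (so is not left orderable) yet acts by diffeomorphisms with the orbit bound realized on a ball of radius exactly $\frac{1}{\sqrt{3}}k$.
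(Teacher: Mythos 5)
Your proof is correct and follows essentially the same route as the paper's: Burns--Hale applied to the prime-end circle action coming from the proof of Theorem \ref{thm}, with Thurston stability plus the observation that the derivative representation at a boundary fixed point lands in (a copy of) $\Aff^+(\R)$ handling the case where the circle action is trivial. The differences are cosmetic --- you run the construction on $HB$ rather than $GB$, spell out the density-of-principal-points and fixed-direction arguments the paper leaves implicit, and split the final step into cases where the paper just invokes the short exact sequence $1 \to \ker(D) \to H \to D(H) \to 1$ and Fact \ref{fact} (a route you also note as an alternative).
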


\begin{proof}
The proof of Theorem \ref{thm} shows that $G$ leaves invariant a simply connected set $U$ and that there is a global fixed point for the action of $G$ on the set of prime ends of $\del U$, which is homeomorphic to a circle.  Let $H$ be any finitely generated subgroup of $G$.  If the action of $H$ on $\pend(U)$ is nontrivial, the global fixed point allows us to turn it into a nontrivial, orientation-preserving action of $H$ on $\R$.  Since $\Homeo^{+}(\R)$ is left orderable, this gives a surjection from $H$ to an infinite left orderable group.   If this is not the case, then $H$ fixes $\del U$ pointwise.  (It is easy to show that a group fixes all prime ends of a region if and only if it fixes the boundary of the region pointwise.)  In this situation we will use Thurston stability.  Let $p$ be a point on $\del U$ and let $D:H \to \GL(2,\R)$ be the derivative at $p$.  That $H$ fixes $\del U$ pointwise means that $p$ is an accumulation point of fixed points, so $H$ fixes both $p$ and a tangent vector at $p$ and the image of $D$ lies in $\Aff^+(\R)$.  Since $\Aff^+(\R)$ is left orderable, the image of $D$ is as well.  We now have the following short exact sequence
$$ 1 \to \ker(D) \to H \to D(H) \to 1$$
with $\ker(D)$ and $D(H)$ left orderable.  Fact \ref{fact} implies that $H$ is left orderable as well.  Thus, in both cases we have a surjection from $H$ to an infinite left orderable group, so by Burns--Hale $G$ is left orderable.
\end{proof}

To see sharpness of the constant here, consider the cyclic group $G$ generated by rotation by $\frac{2\pi}{3}$ about the origin.  For each point $x$ in any ball of radius $r = \frac{1}{\sqrt{3}}k$ centered at the origin we have $\| g(x) - h(x)\| \leq k$ for all $g, h \in G$, but this group is finite and therefore not left orderable.

%
%
%
%

\end{document}